\theoremstyle{thmstyleone}%
\newtheorem{theorem}{Theorem}%  meant for continuous numbers
\theoremstyle{thmstyletwo}%
\theoremstyle{thmstylethree}%
\newtheorem*{Thorne}{Thorne Theorem}
\newtheorem*{Sheffer}{Sheffer Theorem}
\newtheorem{corollary}[theorem]{Corollary}
\numberwithin{equation}{section} % To include the number of section
\newcommand{\Ea}{E_{\nu}}
\newcommand{\La}{\Lambda_{\nu}}
\newcommand{\binoma}[2]{\binom{#1}{#2}_{\!\nu}}
\newcommand{\I}{\mathcal{I}}
\newcommand{\B}{\mathfrak{B}}
\newcommand{\E}{\mathcal{E}}
\newcommand{\cc}{\gamma}
\newcommand{\sgn}{\mathrm{sgn}}
\newcommand{\LL}{\mathrm{L}}
\title[Two characterizations of Sheffer-Dunkl sequences]{Two characterizations of Sheffer-Dunkl sequences}
\author[1]{\fnm{Alejandro} \sur{Gil Asensi \orcidlink{0009-0007-9667-0744}}}\email{algila@unirioja.es}
\author*[1]{\fnm{Judit} \sur{M\'inguez Ceniceros\orcidlink{0000-0002-9840-5772}}}\email{judit.minguez@unirioja.es}
\affil*[1]{\orgdiv{Departamento de Matem\'aticas y Computaci\'on}, \orgname{Universidad de La Rioja}, \orgaddress{\city{Logro\~no}, \postcode{26006},  \country{Spain}}}
\begin{document}
	%----------

%%%%%%%%%%%%%%%%%%%%%%%%%%%%%%%%%%%%%%%%%%%%%%%%%%%%%%%%%%%%%%%%%%%%%%%%%%%%%%%%%%%
%%%%%%%%%%%%%%%%%%%%%%%%%%%%%%%%%%%%%%%%%%%%%%%%%%%%%%%%%%%%%%%%%%%%%%%%%%%%%%%%%%%

\abstract{ Sheffer polynomials can be characterized using different Stieltjes integrals. These families of polynomials have been recently extended to the Dunkl context. In this way some classical operators as the derivative operator or the difference operator are replaced as analogous operators in the Dunkl universe. In this paper we establish two Stieltjes integrals that help us to characterize the Sheffer-Dunkl polynomials. }

\keywords{moment problems, Sheffer-Dunkl sequences.}
\pacs[MSC 2020 Classification]{11B83,44A60, 11B68}

	%----------
\maketitle
%----------

%%%%%%%%%%%%%%%%%%%%%%%%%%%%%%%%%%%%
\section{Introduction}
%%%%%%%%%%%%%%%%%%%%%%%%%%%%%%%%%%%%

Let $g(t)$ and $f(t)$ be some formal power series given by
\begin{equation}\label{eq:functions}
g(t)=\sum_{n=0}^{\infty}\frac{b_n}{n!}t^n,\, b_0\ne 0,\quad f(t)=\sum_{n=1}^{\infty}\frac{a_n}{n!}t^n,\, a_1\ne 0.
\end{equation}
Let $\LL_f\colon \mathcal{P}\to \mathcal{P}$ (where $\mathcal{P}$ is the space of polynomials) be the linear operator given by
\begin{equation}\label{eq:operador}
\LL_fp(x):=\sum_{n=0}^{\infty}\frac{a_n}{n!}\frac{d^n}{dx^n}p(x).
\end{equation}
A \textit{Sheffer sequence} $\{s_n(x)\}_{n=0}^{\infty}$ for the pair $(g(t),f(t))$, where  $g(t)$ and $f(t)$ are as in~\eqref{eq:functions}, is defined using any of the two equivalent definitions:
\begin{itemize} 
	\item by a Taylor generating expansion
\begin{equation}
	\label{eq:generating}
	\frac{1}{g(\overline{f}(t))} e^{x\overline{f}(t)} = \sum_{n=0}^{\infty} s_n(x) \frac{t^n}{n!},
\end{equation}
where $\overline{f}(t)$ denotes the compositional inverse of $f(t)$;
\item by the linear operator
\begin{equation}\label{eq:op-pol}
	\LL_f s_n(x)=ns_{n-1}(x),\quad n\ge 1.
\end{equation}
\end{itemize}
Sheffer sequences were studied in \cite{C-H, RomV1, RomV2, RomV3, Rom, RomRot} using Umbral Calculus. Parti\-cular cases of Sheffer polynomials are \textit{Appell polynomials}, which are obtained taking $f(t)=t$, (in this case \eqref{eq:op-pol} is reduced to $s_n'(x)=ns_{n-1}(x)$ and $\overline{f}(t)=t$ in \eqref{eq:generating}) and \textit{associated polynomials} for $f(t)$, which are obtained taking $g(t)=1$.

Other characterizations of this kind of polynomials can be found in \cite{Sheffer} and \cite{Thorne} using Stieltjes integrals. Thorne in \cite{Thorne} gives a characterization for Appell polynomials that can be easily extended to Sheffer polynomials as points out Sheffer in \cite{Sheffer}. In particular, he proves that:
\begin{Thorne}
	A sequence of polynomials $\{A_n(x)\}_{n=0}^{\infty}$ is an Appell sequence ($f(t)=t$) if and only if there exists a function $\alpha(x)$ of bounded variation on $(0,\infty)$ with the following properties:
	\begin{enumerate}
		\item [i)] the moment integrals
		\[
		\mu_n=\int_0^{\infty}x^n\,d\alpha(x)
		\]
		all exist and $\mu_0\ne 0$;
		\item [ii)] if $A_n^{(r)}(x)$ denotes the $r$-derivative of $A_n(x)$,
		\[
		\int_0^{\infty}A^{(r)}_n(x)\,d\alpha(x)=n!\,\delta_{n,r},
		\]
		where $\delta_{n,r}=1$ if $n=r$ and $\delta_{n,r}=0$ if $n\ne r$.
	\end{enumerate}
In this case, the function $g(t)$ in \eqref{eq:generating} is given by
\[
g(t)=\sum_{n=0}^{\infty}\mu_n\frac{t^n}{n!}=\int_0^{\infty}e^{tx}\,d\alpha(x).
\]
\end{Thorne}
On the other hand, Sheffer in \cite{Sheffer} provides a characterization for the Sheffer sequences by raising a different integral.

\begin{Sheffer}
	A sequence of polynomials $\{s_n(x)\}_{n=0}^{\infty}$ is a Sheffer sequence for the pair $(g(t),f(t))$ if and only if there exists a function $\beta(x)$ of bounded variation on $(0,\infty)$ with the following properties:
	\begin{enumerate}
		\item [i)] the moment integrals
		\[
		\omega_n=\int_0^{\infty}x^n\,d\beta(x)
		\]
		all exist and $\omega_0\ne 0$;
		\item [ii)] the polynomials $s_n(x)$ can be expressed as
		\begin{equation}\label{eq:Sheffer-Sheffer}
		s_n(x)=\int_0^{\infty}p_n(x+t)^n\,d\beta(t),
		\end{equation}
		where $p_n(x)$ are the associated polynomials for $f(t)$.
	\end{enumerate}
	In this case, the function $g(t)$ is given by
	\[
	\frac{1}{g(t)}=\int_0^{\infty}e^{xt}\,d\beta(x)=\sum_{n=0}^{\infty}\omega_n\frac{t^n}{n!}.
	\]
\end{Sheffer}
Note that if $f(t)=t$ we have a characterization for Appell polynomials. In this case the associated polynomials in \eqref{eq:Sheffer-Sheffer} are given by $p_n(x)=x^n$.

The main complication when applying these theorems is to find, for a given sequence of moments $\{c_n\}_{n=0}^{\infty}$, the corresponding function $w(x)$ such that
\[
c_n=\int_{-\infty}^{\infty}x^n\,dw(x).
\]
A moment problem is said to have a solution if $w(x)$ is a positive measure. In~\cite{Duran} the author provides a technique to find the functions $w(x)$.

In \cite{GMV}, Sheffer polynomials are extended to Dunkl context. In this case, the derivative operator in \eqref{eq:operador} is replaced by the Dunkl operator
\begin{equation*}
	%\label{eq:La}
	\La f(x) = \frac{d}{dx}f(x) + \frac{2\nu+1}{2}
	\left(\frac{f(x)-f(-x)}{x}\right),
\end{equation*}
where $\nu > -1$ is a fixed parameter (see \cite{Du, Ros}). Observe that the case $\nu=-1/2$ recovers the classical case $\Lambda_{-1/2} = \frac{d}{dx}$. The sequence of $n!$ is replaced by 
\begin{equation}
	\label{eq:ccna}
	\cc_{n,\nu} =
	\begin{cases} 
		2^{2k}k!\,(\nu+1)_k, & \text{if $n=2k$},\\
		2^{2k+1}k!\,(\nu+1)_{k+1}, & \text{if $n=2k+1$},
	\end{cases}
\end{equation}
where $(a)_n$ denotes the Pochhammer symbol
\[
(a)_n = a(a+1)(a+2) \cdots (a+n-1) = \frac{\Gamma(a+n)}{\Gamma(a)}
\]
(with $a$ a non-negative integer); of course, $\cc_{n,-1/2} = n!$.

In order to define the Sheffer-Dunkl polynomials we take two formal power series in this way 
\begin{equation}\label{eq:functions-D}
	g(t)=\sum_{n=0}^{\infty}\frac{b_n}{\cc_{n,\nu}}t^n,\, b_0\ne 0,\quad f(t)=\sum_{n=1}^{\infty}\frac{a_n}{\cc_{n,\nu}}t^n,\, a_1\ne 0.
\end{equation}
Then a\textit{ Sheffer-Dunkl sequence $\{s_{n,\nu}(x)\}_{n=0}^{\infty}$ for the pair $(g(t),f(t))$} as in~\eqref{eq:functions-D} satisfies that
\begin{equation}\label{eq:oper-lineal-D}
\LL_fs_{n,\nu}(x):=\sum_{n=1}^{\infty}\frac{a_n}{\cc_{n,\nu}}\La^n s_{n,\nu}(x)=\frac{\cc_{n,\nu}}{\cc_{n-1,\nu}}s_{n-1,\nu}(x),
\end{equation}
where $\La^0$ is the identity operator and $\La^{n+1} = \La(\La^n)$. Or, equivalently, 
\begin{equation}\label{eq:gener-S-F}
\frac{1}{g(\overline{f}(t))}\Ea(x\overline{f}(t))=\sum_{n=0}^{\infty}s_{n,\nu}(x)\frac{t^n}{\cc_{n,\nu}},
\end{equation}
where $\overline{f}(t)$ denotes the compositional inverse of $f(t)$ and $\Ea(t)$ is called Dunkl exponential or Dunkl kernel, and it is defined by
\[
\Ea(t) = \sum_{n=0}^\infty \frac{t^n}{\cc_{n,\nu}}.
\]
This function plays the role of the exponential function in the classical case. 
It satisfies $\La(\Ea(\lambda t)) = \lambda \Ea(\lambda t)$ and it can also be written as
\[
\Ea(t) = \I_\nu(t) + \frac{1}{2(\nu+1)}G_{\nu}(t),
\]
where
\begin{equation*}
	\label{eq:def-I}
	\I_\nu(t) = 2^\nu \Gamma(\nu+1)\frac{J_\nu(it)}{(it)^\nu} 
	= \sum_{n=0}^\infty \frac{t^{2n}}{\cc_{2n,\nu}}
\end{equation*}
and
\begin{equation*}
	\label{eq:def-G}
	G_\nu(t) = t \I_{\nu+1}(t) = \sum_{n=0}^\infty \frac{t^{2n+1}}{\cc_{2n,\nu}} 
	= 2(\nu + 1) \sum_{n=0}^\infty \frac{t^{2n+1}}{\cc_{2n + 1,\nu}}.
\end{equation*}
Here, $J_\nu(t)$ is the Bessel function of order $\nu$ (and hence, $\I_\nu(t)$ is a small variation of the modified Bessel function of the second kind, $I_\nu(t)$), see~\cite{Wat} or~\cite{OlMa-NIST}. Therefore, $\I_\nu(t)$ and $G_\nu(t)$ play the role of the even part and the odd part of $\Ea(t)$, respectively. We remark that, when $\nu=-1/2$, we recover the classical case, i.e., $\Lambda_{-1/2} = d/dt$, $E_{-1/2}(x) = e^t$, $\mathcal{I}_{-1/2}(t)=\cosh t$ and $G_{-1/2}(t)=\sinh t$.

%COPIADO DEL TRABAJO DEL MALASSYAN

Finally, we are going to need the \textit{translation operator} in this context. The Dunkl translation is  defined as
\begin{equation}
	\label{eq:translation}
	\tau_y f(x) = \sum_{n=0}^\infty \La^{n}f(x) \frac{y^n}{\cc_{n,\nu}},
	\qquad \nu>-1.
\end{equation}
This is just a generalization of the classical translation, but considered as a Taylor expansion of a function $f$ around a fixed point $x$, that is,
\[
f(x+y) = \sum_{n=0}^\infty f^{(n)}(x) \frac{y^n}{n!}.
\]
The Dunkl translation has a property than resembles the Newton binomial $(x+y)^n = \sum_{k=0}^n \binom{n}{k} y^k x^{n-k}$, which is
\begin{equation*}
	%\label{eq:Newton}
	\tau_y ((\cdot)^n)(x) = 
	\sum_{k=0}^n \binoma{n}{k} y^k x^{n-k},
\end{equation*}
where
\[
\binoma{n}{j} = \frac{\cc_{n,\nu}}{\cc_{j,\nu}\cc_{n-j,\nu}}.
\]
Of course, $\binoma{n}{j}$ becomes the ordinary binomial numbers in the case $\nu=-1/2$.
Some other properties of the Dunkl translation, including an integral expression that is more general than~\eqref{eq:translation}, can be found in \cite{Ros} and~\cite{ThX}.

In that setting, an \textit{Appell-Dunkl sequence $\{A_{n,\nu}(x)\}_{n=0}^{\infty}$ for $g(t)$} is
a sequence of polynomials that satisfies
\begin{equation*}
	%\label{eq:LaPk}
	\La A_{n,\nu}(x) =\frac{\cc_{n,\nu}}{\cc_{n-1,\nu}}A_{n-1,\nu}(x)
\end{equation*}
and its generating function is 
\[
\frac{1}{g(t)}\Ea(xt)=\sum_{n=0}^{\infty}A_{n,\nu}(x)\frac{t^n}{\cc_{n,\nu}}.
\]
Also, the \textit{associated Dunkl polynomials $p_{n,\nu}(x)$ for a function $f(t)$} are defined by means of the generating function
\begin{equation*}\label{eq:gen-aso}
\Ea(x\overline{f}(t))=\sum_{n=0}^{\infty}p_{n,\nu}(x)\frac{t^n}{\cc_{n,\nu}}.
\end{equation*}
Other important family of Sheffer-Dunkl polynomials are \textit{discrete Appell-Dunkl polynomials}. They have been introduced and studied in \cite{ExLaMiVa}. The generating function of this kind of polynomials $\{d_{n,\nu}(x)\}_{n=0}^{\infty}$ is given by
\[
\frac{1}{g(\overline{G_{\nu}}(t))}\Ea(x\overline{G_{\nu}}(t))=\sum_{n=0}^{\infty}d_{n,\nu}(x)\frac{t^n}{\cc_{n,\nu}}.
\]
The operator $\LL_f$ for the discrete Appell-Dunkl polynomials is
\begin{equation}\label{eq:oper-dis}
	\LL_{G_{\nu}}=(\nu+1)(\tau_1-\tau_{-1}).
\end{equation}
And it holds that
\[
\LL_{G_{\nu}}d_{n,\nu}(x)=\frac{\cc_{n,\nu}}{\cc_{n-1,\nu}}d_{n-1,\nu}.
\]
If $g(t)=1$ we obtain the Dunkl factorial  polynomials $\{f_{n,\nu}(x)\}_{n=0}^{\infty}$ generated in the following way
\begin{equation}\label{eq:fact-Dunkl}
\Ea(x\overline{G_{\nu}}(t))=\sum_{n=0}^{\infty}f_{n,\nu}(x)\frac{t^n}{\cc_{n,\nu}}.
\end{equation}

The paper is structured in this way. Section 2 is devoted to extend the Thorne Theorem to the Sheffer-Dunkl sequences of polynomials. In Section 3 we characterized these polynomials extending the Sheffer Theorem to the Dunkl context. Finally, in Section 4 we show several examples of Sheffer-Dunkl polynomials where we contruct the corresponding measures and obtain properties of these families, some of them, unkown.

%%%%%%%%%%%%%%%%%%%%%%%%%%%%%%%%%%
\section{Thorne Theorem for the Sheffer-Dunkl polynomials}
%%%%%%%%%%%%%%%%%%%%%%%%%%%%%%%%%%

In this Section we present the Thorne Theorem extended to Sheffer-Dunkl polyno\-mials. 

\begin{theorem}\label{teo:Thorne-S-D}
	Let $g(t)$, $f(t)$ be two formal series as in \eqref{eq:functions-D}. Then a sequence  $\{s_{n,\nu}(x)\}_{n=0}^{\infty}$ is the Sheffer-Dunkl sequence for the pair $(g(t),f(t))$ if and only if there exists a function $\alpha_{\nu}(x)$ of bounded variation on $(-\infty,\infty)$ with the following properties
	\begin{enumerate}
		\item [i)] there exist the moment integrals 
		\[
		\mu_{n,\nu}=\int_{-\infty}^{\infty}x^n\,d\alpha_{\nu}(x), \quad n=0,1,\ldots,
		\]
		and $\mu_{0,\nu}\ne 0$;
		\item [ii)] the polynomials $s_{n,\nu}(x)$ satisfy
		\begin{equation}\label{eq:thorne-S-D}
			\int_{-\infty}^{\infty}\LL_f^r s_{n,\nu}(x)\,d\alpha_{\nu}(x)=\cc_{n,\nu}\delta_{n,r},
		\end{equation}
	where $\LL^r_f$ is the linear operator defined in \eqref{eq:oper-lineal-D} applied $r$ times. 
	\end{enumerate}
\end{theorem}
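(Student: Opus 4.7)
The plan is to recast condition ii) as a moment identity linking $\alpha_\nu$ to the coefficients of $g$, exploit the Sheffer-Dunkl generating function \eqref{eq:gener-S-F}, and then invoke a moment-construction result for the existence of the measure. For the forward direction, iterating \eqref{eq:oper-lineal-D} gives $\LL_f^r s_{n,\nu}(x)=(\cc_{n,\nu}/\cc_{n-r,\nu})\,s_{n-r,\nu}(x)$ for $n\ge r$ (and $0$ otherwise), so \eqref{eq:thorne-S-D} is equivalent to the orthogonality condition $\int_{-\infty}^{\infty}s_{m,\nu}(x)\,d\alpha_\nu(x)=\delta_{m,0}$ for every $m\ge 0$. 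Integrating \eqref{eq:gener-S-F} against $\alpha_\nu$ termwise, this collapses to
\[
\frac{1}{g(\overline{f}(t))}\int_{-\infty}^{\infty}\Ea(x\overline{f}(t))\,d\alpha_\nu(x)=1.
\]
Substituting $u=\overline{f}(t)$ and expanding $\Ea(xu)=\sum_{n\ge 0}(xu)^n/\cc_{n,\nu}$ converts this into the moment identification $\mu_{n,\nu}=b_n$ for every $n$, hence $\mu_{0,\nu}=b_0\ne 0$. The existence of a function $\alpha_\nu$ of bounded variation on $\mathbb{R}$ with prescribed moments $\{b_n\}$ then closes this direction, for which I would invoke the construction from \cite{Duran}.

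\textbf{Backward direction.} Given $\alpha_\nu$ and $\{s_{n,\nu}\}$ satisfying i)--ii), I would set $b_n:=\mu_{n,\nu}$ and define $g(t):=\sum_{n\ge 0} b_n t^n/\cc_{n,\nu}=\int_{-\infty}^{\infty}\Ea(xt)\,d\alpha_\nu(x)$, so that $b_0=\mu_{0,\nu}\ne 0$ makes $g$ admissible in \eqref{eq:functions-D}. By the forward direction, the Sheffer-Dunkl sequence $\{\tilde s_{n,\nu}\}$ for this $(g,f)$ satisfies \eqref{eq:thorne-S-D} with the same $\alpha_\nu$. Equality $s_{n,\nu}=\tilde s_{n,\nu}$ then follows from a uniqueness argument: since $a_1\ne 0$ makes $\LL_f$ lower degree by exactly one, and since $\mu_{0,\nu}\ne 0$, the linear functionals $p\mapsto\int_{-\infty}^{\infty}\LL_f^r p\,d\alpha_\nu$ for $r=0,\ldots,n$ separate polynomials of degree $\le n$; a descending induction on degree forces $s_{n,\nu}-\tilde s_{n,\nu}=0$.

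\textbf{Main obstacle.} The genuinely non-routine step is the construction of $\alpha_\nu$ of bounded variation on $(-\infty,\infty)$ whose moments match an arbitrary prescribed sequence $\{b_n\}$. The shift from $(0,\infty)$ in the classical Thorne and Sheffer theorems to $\mathbb{R}$ here is forced by the Dunkl kernel carrying both even and odd parts, so both even and odd moments must be captured independently. Positivity is not required, which gives more flexibility than a classical Hamburger problem; I would rely on the signed-measure technique from \cite{Duran} to produce $\alpha_\nu$ canonically. A secondary technicality is the justification of termwise integration of the generating series, which is immediate as a formal-power-series identity but needs moment growth control for an analytic reading.
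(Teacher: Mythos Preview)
Your proposal is correct and matches the paper's argument closely: both directions hinge on integrating the generating function \eqref{eq:gener-S-F} against $\alpha_\nu$ and on the uniqueness of a polynomial sequence satisfying \eqref{eq:thorne-S-D} (the paper phrases uniqueness via an explicit triangular linear system in the coefficients, solvable because $\mu_{0,\nu}\ne 0$, while you phrase it via separating functionals---these are the same linear-algebra fact). One correction: for the existence of a bounded-variation $\alpha_\nu$ with an arbitrary prescribed moment sequence the paper invokes the Boas theorem \cite[Chapter~3, \S14, Theorem~14]{Widder} rather than \cite{Duran}; the Dur\'an technique is used only in Section~4 to produce \emph{explicit} measures in the examples, whereas Boas's result is the general existence statement you actually need at this point.
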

\begin{proof}
	Let $s_{n,\nu}(x)$ be Sheffer-Dunkl polynomials. We define $\mu_{n,\nu}$ as
	\[
	g(t)=\sum_{n=0}^{\infty}\mu_{n,\nu}\frac{t^n}{\cc_{n,\nu}},
	\]
	and $\mu_{0,\nu}\ne 0$ from \eqref{eq:functions-D}. By \cite[Chapter 3, \S 14, Theorem 14]{Widder}, there exists $\alpha_{\nu}(x)$ of bounded variation on $({-\infty},\infty)$ such that
	\[
	\mu_{n,\nu} = \int_{-\infty}^{\infty} x^n\,d\alpha_{\nu}(x)
	\]
	exists for $n=0,1,\ldots$ Then,
	\begin{equation}\label{eq:Thorne-g-S-D}
		g(t)=\int_{-\infty}^{\infty}\Ea(xt)\,d\alpha_{\nu}(x).
	\end{equation}
	Applying $r$ times the operator $\LL_f$  to the right part of \eqref{eq:gener-S-F}, and taking into account \eqref{eq:oper-lineal-D} we have
	\[
	\sum_{n=0}^{\infty}\LL^r_fs_{n,\nu}(x)\frac{t^n}{\cc_{n,\nu}}=t^r\frac{1}{g(\overline{f}(t))}\Ea(x\overline{f}(t)).
	\]
	Applying now the integral operator, from \eqref{eq:Thorne-g-S-D} we obtain
	\[
	\sum_{n=0}^{\infty}\int_{-\infty}^{\infty}\LL^r_fs_{n,\nu}(x)\,d\alpha_{\nu}(x)\frac{t^n}{\cc_{n,\nu}}=t^r,
	\]
	and equating coefficients \eqref{eq:thorne-S-D} is proved.
	
	Now, we suppose that i) and ii) hold. We are going to construct a  polynomial $s_{n,\nu}(x)$ of degree $n$ given by
	\[
	s_{n,\nu}(x)=c_nx^n+c_{n-1}x^{n-1}+\cdots+c_1x+c_0,
	\]
	and such that satisfies ii). Then we obtain the following system of equations
	\begin{equation*}\label{eq:sist}
		\left\{ \begin{array}{lllll}
			c_n \frac{\gamma_n}{\gamma_0}\mu_{0,\nu}  & & & & = \gamma_n, \\
			c_n \frac{\gamma_n}{\gamma_{1}}\mu_{1,\nu} &+ c_{n-1} \frac{\gamma_{n-1}}{\gamma_0}\mu_{0,\nu} & & & = 0, \\
				\vdots & \vdots & \vdots& & \vdots\\
			c_n \frac{\gamma_{n}}{\gamma_{n-r}}\mu_{n-r,\nu}  &+ \cdots &+ c_r \frac{\gamma_r}{\gamma_0}\mu_{0,\nu} & & = 0, \\
			\vdots & \vdots & \vdots& \vdots& \vdots\\
		c_n \mu_{n,\nu} &+ \cdots &+ \cdots &+ c_0 \mu_{0,\nu} & = 0.
		\end{array}\right.
	\end{equation*}
This system has a unique solution if $\mu_{0,\nu}\ne 0$. Now we have to see that these polynomials $s_{n,\nu}(x)$ are Sheffer-Dunkl polynomials. From ii)
\[
\int_{-\infty}^{\infty} \LL_f^{r+1}s_{n+1,\nu}(x)\,d\alpha_{\nu}(x)=\cc_{n+1,\nu}\delta_{n+1,r+1}.
\]
Let $r_{n,\nu}(x)=\frac{\cc_{n,\nu}}{\cc_{n+1,\nu}}\LL_f s_{n+1,\nu}(x)$, then
\[
\int_{-\infty}^{\infty} \LL_f^r r_{n,\nu}(x)\,d\alpha_{\nu}(x)=\frac{\cc_{n,\nu}}{\cc_{n+1,\nu}}\cc_{n+1,\nu}\delta_{n+1,r+1}=\cc_{n,\nu}\delta_{n,r}=\int_{-\infty}^{\infty} \LL_f^rs_{n,\nu}(x)\,d\alpha_{\nu}(x).
\]
Then, as the polynomials $s_{n,\nu}(x)$ are unique, we have that $r_{n,\nu}(x)=s_{n,\nu}(x)$ and 
\[
s_{n,\nu}(x)=\frac{\cc_{n,\nu}}{\cc_{n+1,\nu}}\LL_f s_{n+1,\nu}(x).
\]	
\end{proof}

\begin{corollary}
	\label{cor:g(t)-Thorne}
	Let $\{s_{n,\nu}(x)\}_{n=0}^{\infty}$ be the Sheffer-Dunkl sequence for the pair $(g(t),f(t))$. Then, it holds
	\begin{equation}\label{eq:g(t)-Thorne}
		g(t)=\int_{-\infty}^{\infty}\Ea(xt)\,d\alpha_{\nu}(x)=\sum_{n=0}^{\infty}\mu_{n,\nu}\frac{t^n}{\cc_{n,\nu}}.
	\end{equation}
\end{corollary}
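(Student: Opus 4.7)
The statement is essentially a restatement of an identity already established during the proof of Theorem~\ref{teo:Thorne-S-D}, so my plan is to point this out and record the short derivation. Recall that in the forward direction of that proof, the moments $\mu_{n,\nu}$ were introduced as the Dunkl-Taylor coefficients of $g(t)$, which gives the second equality in~\eqref{eq:g(t)-Thorne} automatically, and Widder's theorem supplied the function $\alpha_\nu$ of bounded variation on $(-\infty,\infty)$ realizing them as $\mu_{n,\nu}=\int_{-\infty}^{\infty} x^n\,d\alpha_\nu(x)$.

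With these two facts in hand, I would substitute the integral expression for $\mu_{n,\nu}$ into the series, interchange the sum and the integral, and collapse the inner sum using the defining expansion $\Ea(xt)=\sum_{n=0}^{\infty}(xt)^n/\cc_{n,\nu}$ of the Dunkl exponential:
\[
g(t)=\sum_{n=0}^{\infty}\mu_{n,\nu}\frac{t^n}{\cc_{n,\nu}}=\sum_{n=0}^{\infty}\frac{t^n}{\cc_{n,\nu}}\int_{-\infty}^{\infty}x^n\,d\alpha_{\nu}(x)=\int_{-\infty}^{\infty}\sum_{n=0}^{\infty}\frac{(xt)^n}{\cc_{n,\nu}}\,d\alpha_{\nu}(x)=\int_{-\infty}^{\infty}\Ea(xt)\,d\alpha_{\nu}(x).
\]
This yields the first equality of~\eqref{eq:g(t)-Thorne} and is exactly the equation labeled~\eqref{eq:Thorne-g-S-D} that appeared in the proof of Theorem~\ref{teo:Thorne-S-D}.

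The only point needing justification is the exchange of sum and integral, which I would handle by restricting to $t$ in a sufficiently small neighborhood of the origin so that the series $\sum_{n\geq 0}|xt|^n/\cc_{n,\nu}$ is dominated, against the total variation measure $d|\alpha_{\nu}|$, by an integrable bound; this uses the finiteness of the moments $\int_{-\infty}^{\infty}|x|^n\,d|\alpha_\nu|(x)$ guaranteed by hypothesis (i) of Theorem~\ref{teo:Thorne-S-D} together with the factorial-type growth of $\cc_{n,\nu}$ recorded in~\eqref{eq:ccna}. Since the same manipulation was already implicitly carried out while proving Theorem~\ref{teo:Thorne-S-D}, no genuinely new obstacle arises, and the corollary reduces to displaying a couple of lines extracted from that argument.
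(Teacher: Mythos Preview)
Your argument is correct: the identity you need is precisely equation~\eqref{eq:Thorne-g-S-D}, already recorded inside the proof of Theorem~\ref{teo:Thorne-S-D}, and you simply unpack how that line is obtained from the definition of the $\mu_{n,\nu}$ and the Boas/Widder realization of $\alpha_\nu$.

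The paper, however, does not argue this way. Instead of quoting~\eqref{eq:Thorne-g-S-D}, it re-derives the identity from the \emph{conclusion} of Theorem~\ref{teo:Thorne-S-D}: it applies the operator $\int_{-\infty}^{\infty}\LL_f^r(\cdot)\,d\alpha_\nu(x)$ to both sides of the generating expansion~\eqref{eq:gener-S-F}; property~(ii) turns the right-hand side into $t^r$, while on the left one gets $t^r\,\dfrac{1}{g(\overline f(t))}\displaystyle\int_{-\infty}^{\infty}\Ea(x\overline f(t))\,d\alpha_\nu(x)$, and cancelling $t^r$ yields $g(\overline f(t))=\int\Ea(x\overline f(t))\,d\alpha_\nu(x)$. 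Your route is shorter and more transparent, but it leans on the particular construction of $\alpha_\nu$ in the forward direction of the theorem; the paper's route shows that the integral representation of $g$ follows from condition~(ii) alone, hence holds for \emph{any} $\alpha_\nu$ satisfying that condition, not just the one produced by Boas' theorem.
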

\begin{proof}
	Applying the operator $\int_{-\infty}^\infty \LL^r(\cdot)\,d\alpha_{\nu}(x)$ to \eqref{eq:gener-S-F}, we have
	\[
	t^r\frac{1}{g(\overline{f}(t))}\int_{-\infty}^{\infty}\Ea(x\overline{f}(t))\,d\alpha_{\nu}(x)=t^r,
	\]
	and this implies
	\[
	g(\overline{f}(t))= \int_{-\infty}^{\infty} \Ea(x\overline{f}(t))\,d\alpha_{\nu}(x).
	\]
\end{proof}

%%%%%%%%%%%%%%%%%%%%%%%%%%%%%%
\section{Sheffer Theorem for the Sheffer-Dunkl polynomials}
%%%%%%%%%%%%%%%%%%%%%%%%%%%%%%%%

Now, we are going to generalize the Sheffer Theorem to the Dunkl case.

\begin{theorem}\label{thm:Sheffer-Dunkl} Let $f(t)$ and $g(t)$ be formal power series as in \eqref{eq:functions-D}. A sequence of polynomials $\{s_{n,\nu}(x)\}_{n=0}^{\infty}$ is the Sheffer-Dunkl sequence for $(g(t),f(t))$ if and only of there exists a function $\beta_{\nu}(x)$ of bounded variation on $({-\infty},\infty)$ such that
\begin{enumerate}
\item [i)] there exist the moment integrals
\[
\omega_{n,\nu}=\int_{-\infty}^{\infty} x^n\,d\beta_{\nu}(x),\quad n=0,1,\ldots,
\]
and $\omega_{0,\nu}\ne 0$.
\item [ii)] the polynomials $s_{n,\nu}(x)$ can be expressed as
\begin{equation}\label{eq:S-F-integral}
s_{n,\nu}(x)=\int_{-\infty}^{\infty}\tau_t(p_{n,\nu})(x)\,d\beta_{\nu}(t),
\end{equation}
where $\{p_{n,\nu}(x)\}_{n=0}^{\infty}$ is the sequence of associated Dunkl polynomials for the function $f(t)$.
\end{enumerate}
\end{theorem}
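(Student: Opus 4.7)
The plan is to convert the integral representation (ii) into the generating-function definition \eqref{eq:gener-S-F} of a Sheffer-Dunkl sequence, using the multiplicative property of the Dunkl exponential under the Dunkl translation. The cornerstone is the identity
\[
\tau_t(\Ea(\lambda \cdot))(x) = \Ea(\lambda x)\,\Ea(\lambda t),
\]
which follows at once from \eqref{eq:translation} because $\La^{n}\Ea(\lambda \cdot)(x) = \lambda^{n}\Ea(\lambda x)$. Applying $\tau_{t}$ in the $x$-variable to the generating function of the associated Dunkl polynomials and specializing $\lambda = \overline{f}(s)$ yields the key identity
\[
\sum_{n=0}^{\infty}\tau_{t}(p_{n,\nu})(x)\frac{s^{n}}{\cc_{n,\nu}} = \Ea\bigl(x\overline{f}(s)\bigr)\,\Ea\bigl(t\overline{f}(s)\bigr).
\]

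For the necessity direction, I would start from a Sheffer-Dunkl sequence for $(g,f)$ and define $\omega_{n,\nu}$ by the expansion $1/g(t)=\sum \omega_{n,\nu}t^{n}/\cc_{n,\nu}$, which is legitimate with $\omega_{0,\nu}=1/b_{0}\ne 0$. Invoking the Widder moment theorem exactly as in the proof of Theorem \ref{teo:Thorne-S-D} produces a function $\beta_{\nu}$ of bounded variation on $(-\infty,\infty)$ such that $\omega_{n,\nu}=\int x^{n}\,d\beta_{\nu}(x)$, whence $1/g(u) = \int_{-\infty}^{\infty}\Ea(xu)\,d\beta_{\nu}(x)$. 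Integrating the key identity against $d\beta_{\nu}(t)$ and exchanging sum and integral (legitimate because the coefficient of each $s^{n}$ on the left is a polynomial in $t$ and the moments $\omega_{k,\nu}$ exist) gives
\[
\sum_{n=0}^{\infty}\left(\int_{-\infty}^{\infty}\tau_{t}(p_{n,\nu})(x)\,d\beta_{\nu}(t)\right)\frac{s^{n}}{\cc_{n,\nu}} = \Ea\bigl(x\overline{f}(s)\bigr)\,\frac{1}{g(\overline{f}(s))} = \sum_{n=0}^{\infty}s_{n,\nu}(x)\frac{s^{n}}{\cc_{n,\nu}},
\]
the last equality being \eqref{eq:gener-S-F}. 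Matching coefficients of $s^{n}$ produces \eqref{eq:S-F-integral}.

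For the sufficiency direction, I would reverse the computation: given (i) and (ii), define the formal series $g(t)$ by $1/g(t)=\sum \omega_{n,\nu}t^{n}/\cc_{n,\nu}$, which is of the required shape \eqref{eq:functions-D} because $\omega_{0,\nu}\ne 0$. Substituting \eqref{eq:S-F-integral} into $\sum s_{n,\nu}(x)s^{n}/\cc_{n,\nu}$ and using the key identity in reverse reconstructs \eqref{eq:gener-S-F}, so $\{s_{n,\nu}\}$ is the Sheffer-Dunkl sequence for $(g,f)$.

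The main obstacle is the invocation of a Widder-type result guaranteeing that the coefficients $\omega_{n,\nu}$ arise as moments of a bounded-variation function on the whole real line; since the authors already handle this in Theorem \ref{teo:Thorne-S-D}, I would simply cite the same source. A secondary subtlety is that, since $\beta_{\nu}$ need not be a positive measure, the sum--integral interchange must be justified by the polynomial nature of the $s^{n}$-coefficients in the key identity rather than by dominated convergence; this is painless but worth spelling out explicitly.
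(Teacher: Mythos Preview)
Your proposal is correct and follows essentially the same route as the paper: both invoke the Boas moment theorem from \cite{Widder} to manufacture $\beta_{\nu}$, and your key identity $\sum_{n}\tau_{t}(p_{n,\nu})(x)\,s^{n}/\cc_{n,\nu}=\Ea(x\overline{f}(s))\Ea(t\overline{f}(s))$ is exactly the paper's binomial property $\tau_{u}(p_{n,\nu})(x)=\sum_{k}\binoma{n}{k}p_{k,\nu}(x)p_{n-k,\nu}(u)$ repackaged at the generating-function level. The only cosmetic difference is in the sufficiency direction, where the paper verifies the operator relation \eqref{eq:oper-lineal-D} via the commutation of $\LL_f$ with $\tau_t$, while you reconstruct \eqref{eq:gener-S-F} directly; since these two characterizations of Sheffer-Dunkl sequences are equivalent, the arguments are interchangeable.
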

\begin{proof}

If the condition i) holds it is clear that $\{s_{n,\nu}(x)\}_{n=0}^{\infty}$ given by \eqref{eq:S-F-integral} is a sequence of Sheffer-Dunkl polynomials for $(g(t),f(t))$ because
\begin{multline*}
\LL_f s_{n,\nu}(x)=\int_{-\infty}^{\infty}\LL_f\circ\tau_t(p_{n,\nu})(x)\,d\beta_{\nu}(t)=\int_{-\infty}^{\infty}\tau_t\circ\LL_f(p_{n,\nu})(x)\,d\beta_{\nu}(t)\\=\int_{-\infty}^{\infty}\frac{\cc_{n,\nu}}{\cc_{n-1,\nu}}\tau_t(p_{n-1,\nu})(x)\,d\beta_{\nu}(t)=\frac{\cc_{n,\nu}}{\cc_{n-1,\nu}}s_{n-1,\nu}(x).
\end{multline*}
 In the second equality, we have used that the linear operator $\LL_f$ and the translation operator $\tau_t$ commute (see \cite{GMV}).
 
 Now, let $\{s_{n,\nu}(x)\}_{n=0}^{\infty}$ be the Sheffer-Dunkl sequence for $(g(t),f(t))$. We write the power series of the function
 \[
 \frac{1}{g(t)}=\sum_{n=0}^{\infty} b_{n}\frac{t^n}{\cc_{n,\nu}}.
 \]
 We define $\omega_{n,\nu}=b_n$ and let $\beta_{\nu}(t)$ be a function of bounded variation on $({-\infty},\infty)$, guaranteed by the Boas theorem (\cite[Chapter 3, \S 14, Theorem 14]{Widder}), whose moments are $\{\omega_{n,\nu}\}_{n=0}^{\infty}$. With this function, $\beta_{\nu}(t)$, we take the following Sheffer-Dunkl sequence
 \begin{equation}\label{eq:aux}
 Q_n(t)=\int_{-\infty}^{\infty}\tau_t(p_{n,\nu})(x)\,d\beta_{\nu}(t),
 \end{equation}
 for a pair $(g_1(t),f(t))$. We have to see that $g_1(t)=g(t)$. From \eqref{eq:gener-S-F} and \eqref{eq:aux}  we have
 \[
 \frac{1}{g_1(\overline{f}(t))}\Ea(x\overline{f}(t))=\sum_{n=0}^{\infty}\frac{1}{\cc_{n,\nu}}\int_{-\infty}^{\infty}\tau_u(p_{n,\nu})(x)\,d\beta_{\nu}(u)t^n.
 \]
 Using the binomial property for the associated Dunkl polynomials proved in \cite{GMV}
 \[
 \tau_u(p_{n,\nu})(x)=\sum_{k=0}^n\binoma{n}{k}p_{k,\nu}(x)p_{n-k,\nu}(u),
 \]
 we have 
 \begin{align*}
 \frac{1}{g_1(\overline{f}(t))}\Ea(x\overline{f}(t))&=\sum_{k=0}^{\infty}\sum_{k=0}^n\frac{p_{k,\nu}(x)t^k}{\cc_{k,\nu}}\int_{-\infty}^{\infty}\frac{p_{n-k,\nu}(u)t^{n-k}}{\cc_{n-k,\nu}}\,d\beta_{\nu}(u)\\&=\Ea(x\overline{f}(t))\int_{-\infty}^{\infty}\Ea(u\overline{f}(t))\,d\beta_{\nu}(u)\\&=\sum_{n=0}^{\infty} \int_{-\infty}^{+\infty}u^n\,d\beta_{\nu}(u)\frac{\overline{f}(t)^n}{\cc_{n,\nu}}\\&= \frac{1}{g(\overline{f}(t))}\Ea(x\overline{f}(t)).
 \end{align*}
 Then, $g(t)=g_1(t)$ and $s_{n,\nu}(x)=Q_n(x)$.
 
\end{proof}
From the proof of Theorem \ref{thm:Sheffer-Dunkl} we deduce that:
\begin{corollary}
	\label{cor:g(t)-Sheffer}
	Let $\{s_{n,\nu}(x)\}_{n=0}^{\infty}$ be the Sheffer-Dunkl sequence for a pair $(g(t),f(t))$. Then, it holds
	\begin{equation}\label{eq:g(t)-Sheffer}
		g(t)=\left(\int_{-\infty}^{\infty}\Ea(xt)\,d\beta_{\nu}(x)\right)^{-1}=\left(\sum_{n=0}^{\infty}\omega_{n,\nu}\frac{t^n}{\cc_{n,\nu}}\right)^{-1}.
	\end{equation}
\end{corollary}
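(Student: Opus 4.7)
The plan is to extract this corollary directly from the construction carried out in the proof of Theorem \ref{thm:Sheffer-Dunkl}, since the function $\beta_{\nu}$ was built precisely to realize the coefficients of $1/g(t)$ as moments. In other words, no new work on the Sheffer-Dunkl side is needed; the statement is essentially a repackaging of the two defining identities for $\omega_{n,\nu}$ introduced inside that proof.

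More concretely, I would start from the equality
\[
\frac{1}{g(t)}=\sum_{n=0}^{\infty} \omega_{n,\nu}\frac{t^n}{\cc_{n,\nu}},
\]
which is nothing but the definition $\omega_{n,\nu}=b_n$ used in the proof of Theorem \ref{thm:Sheffer-Dunkl} (where $b_n$ are the Dunkl Taylor coefficients of $1/g(t)$). This gives the second equality in \eqref{eq:g(t)-Sheffer}. For the first equality, I would substitute the integral representation $\omega_{n,\nu}=\int_{-\infty}^{\infty} x^n\,d\beta_{\nu}(x)$ from condition i) of the theorem, swap the sum and the integral, and recognize the resulting series as the Dunkl kernel:
\[
\sum_{n=0}^{\infty}\omega_{n,\nu}\frac{t^n}{\cc_{n,\nu}}
=\int_{-\infty}^{\infty}\sum_{n=0}^{\infty}\frac{(xt)^n}{\cc_{n,\nu}}\,d\beta_{\nu}(x)
=\int_{-\infty}^{\infty}\Ea(xt)\,d\beta_{\nu}(x).
\]
Finally, inverting yields the stated formula for $g(t)$.

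The only point that requires some care, rather than being purely algebraic, is the interchange of summation and Stieltjes integration. At the level of formal power series it is automatic, since identifying coefficients in $t^n$ on both sides reproduces the definition of $\omega_{n,\nu}$; if one insists on analytic convergence, the same step is the usual one justified by the existence of all moments assumed in Theorem \ref{thm:Sheffer-Dunkl} i) together with the absolute convergence of the series defining $\Ea$. This is the only mild obstacle, and it parallels the treatment of the analogous identity \eqref{eq:g(t)-Thorne} proved in Corollary \ref{cor:g(t)-Thorne}, so no additional machinery is required.
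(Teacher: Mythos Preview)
Your proposal is correct and follows exactly the route the paper intends: the corollary is stated in the paper with no separate proof beyond the remark ``From the proof of Theorem~\ref{thm:Sheffer-Dunkl} we deduce that'', and your argument simply unpacks the two defining identities $\omega_{n,\nu}=b_n$ and $\omega_{n,\nu}=\int x^n\,d\beta_{\nu}(x)$ introduced there, which is precisely what that remark points to.
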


%%%%%%%%%%%%%%%%%%%%%%%%%%%%%%%%%%%%%%%%%%%%%%%%%%%%%%%%%%%%%%%%%%%%%%%%%%%%%%%%%%%%%%%%%%%%%
\section{Examples }
%%%%%%%%%%%%%%%%%%%%%%%%%%%%%%%%%%%%%%%%%%%%%%%%%%%%%%%%%%%%%%%%%%%%%%%%%%%%%%%%%%%%%%%%%%%
In this Section we are going to show some examples of moment problems for different families of Sheffer-Dunkl polynomials. 
%----------
\subsection{Truncated polynomials}
%----------

In \cite{JMC} the following family of Appell-Dunkl polynomials $\{A_{n,\nu}(x)\}_{n=0}^{\infty}$ were studied
\[
\frac{\Ea(xt)}{1-t}=\sum_{n=0}^{\infty}A_{n,\nu}(x)\frac{t^n}{\cc_{n,\nu}}.
\]
In this case,
\[
\frac{1}{g(t)}=\frac{1}{1-t}=\sum_{n=0}^{\infty}\frac{\cc_{n,\nu}}{\cc_{n,\nu}}t^n,
\]
and
\[
g(t)=1-t.
\]
We start giving the function $\alpha_{\nu}(x)$ corresponding to the Theorem~\ref{teo:Thorne-S-D}. From \eqref{eq:g(t)-Thorne}, we are looking for a function whose moments $\{\mu_{n,\nu}\}_{n=0}^{\infty}$ are
\[
\mu_{0,\nu}=1,\quad \mu_{1,\nu}=-\cc_{1,\nu},\quad \mu_{n,\nu}=0,\, n\ge 2.
\]
We are going to follow \cite{Duran} to find $\alpha_{\nu}(x)$. First of all, we construct the auxiliar function
\begin{equation}\label{eq:fun-aux}
F(t)=\sum_{n=0}^{\infty}\frac{i^n}{2\pi n!}\mu_{n,\nu} t^n,
\end{equation} 
that in this case it is given by
\[
F(t)=\frac{1}{2\pi}(1-i\cc_{1,\nu}t).
\]
Now, we make the Fourier transform of $f(t)$ and we obtain that
\begin{equation}\label{eq:medida-T-S}
\alpha_{\nu}(x)=\delta_0(x)+\cc_{1,\nu}\delta_0'(x),
\end{equation}
where $\delta_0$ is the Dirac delta. We are going to verify that it is the right measure. Our polynomials $A_{n,\nu}(x)$ are
\[
A_{n,\nu}(x)=\cc_{n,\nu}\sum_{k=0}^n\frac{x^k}{\cc_{k,\nu}},
\]
and the corresponding operator $\LL_f$ is $\La$ because they are Appell-Dunkl polynomials. It is easy to check that
\[
\int_{-\infty}^{\infty}\La^r A_{n,\nu}(x)\,d\alpha_{\nu}(x)=\frac{\cc_{n,\nu}}{\cc_{n-r,\nu}}A_{n-r,\nu}(0)-\frac{\cc_{n,\nu}}{\cc_{n-r,\nu}}\cc_{1,\nu}A'_{n-r,\nu}(0)=\cc_{n,\nu}\delta_{n,r}.
\]

On the other hand, we can try to find the function that holds Theorem~\ref{thm:Sheffer-Dunkl} for these polynomials. In this case, from \eqref{eq:g(t)-Sheffer}, we are looking for a function of bounded variation $\beta_{\nu}(x)$ whose moments are
\[
\omega_{n,\nu}=\cc_{n,\nu},\quad n=0,1,\ldots.
\]
In \cite{DGV}, it is proved that this function is a positive measure  if and only if $-1<\nu\le -1/2$ and the measure is given by
\begin{equation}\label{eq:medida-gamma}
\beta_{\nu}(x)=\frac{|x|^{\nu+1}(K_{\nu}(|x|)+\sgn K_{\nu+1}(|x|))}{2^{\nu+1}\Gamma(\nu+1)},
\end{equation}
where $K_{\mu}$ is the modified Bessel function of the second kind. So,
\[
A_{n,\nu}(x)=\int_{-\infty}^{\infty}\tau_t(\cdot)^n(x)\,d\beta_{\nu}(t).
\]

%----------
\subsection{Discrete truncated Appell-Dunkl polynomials}
%----------

Now, we are going to study the moment problems for the discrete case for the truncated Appell-Dunkl polynomials. Let $\{a_{n,\nu}(x)\}_{n=0}^{\infty}$ be the discrete truncated Appell-Dunkl polynomials whose genera\-ting function is
\[
\frac{1}{1-\overline{G_{\nu}}(t)}\Ea(x\overline{G_{\nu}}(t))=\sum_{n=0}^{\infty}a_{n,\nu}(x)\frac{t^n}{\cc_{n,\nu}}.
\]
Then, as we have proved in the previous example, the function $\alpha_{\nu}(x)$ for Theorem~\ref{teo:Thorne-S-D} is~\eqref{eq:medida-T-S} and it holds
\[
\int_{-\infty}^{\infty}\LL^r_{G_{\nu}}a_{n,\nu}(x)\,d\alpha_{\nu}(x)=\frac{\cc_{n,\nu}}{\cc_{n-r,\nu}}(a_{n-r,\nu}(0)-\cc_{1,\nu}a'_{n-r}(0))=\cc_{n,\nu}\delta_{n,r}.
\]
On the other hand, \eqref{eq:medida-gamma} is the measure corresponding to Theorem~\ref{thm:Sheffer-Dunkl} and it holds
\[
a_{n,\nu}(x)=\int_{-\infty}^{\infty}\tau_t(f_{n,\nu})(x)\,d\beta_{\nu}(t),
\]
where $\{f_{n,\nu}(x)\}_{n=0}^{\infty}$ are the Dunkl factorial polynomials \eqref{eq:fact-Dunkl}. 

%----------
\subsection{Other Appell-Dunkl polynomials}
%----------

Let $\{A_{n,\nu}(x)\}_{n=0}^{\infty}$ be the sequence of Appell-Dunkl polynomials defined with the ge\-nerating function
\[
\frac{\Ea(xt)}{1-t^2}=\sum_{n=0}^{\infty}A_{n,\nu}(x)\frac{t^n}{\cc_{n,\nu}}.
\]
These polynomials are expressed as
\[
A_{n,\nu}(x)=\cc_{n,\nu}\sum_{k=0}^{[n/2]}\frac{x^{n-2k}}{\cc_{n-2k,\nu}}.
\]
Then, for finding the function of Theorem~\ref{teo:Thorne-S-D} we have to study the function
\[
g(t)=1-t^2.
\]
From \eqref{eq:g(t)-Thorne}, the moments are 
\[
\mu_{0,\nu}=1,\quad \mu_{1,\nu}=0,\quad \mu_{2,\nu}=-\cc_{2,\nu},\quad \mu_{n,\nu}=0,\quad n\ge 3.
\]
The auxliar function \eqref{eq:fun-aux} for them is
\[
F(t)=\frac{1}{2\pi}\left(1+\frac{\cc_{2,\nu}}{2}t^2\right),
\]
and, following the technique of~\cite{Duran}, the function will be
\[
\alpha_{\nu}(x)=\delta_0(x)-\frac{\cc_{2,\nu}}{2}\delta_0''(x).
\]
It is easy to check that
\[
\int_{-\infty}^{\infty} \La A_{n,\nu}(x)\,d\alpha_{\nu}(x)=\frac{\cc_{n,\nu}}{\cc_{n-r,\nu}}(A_{n-r,\nu}(0)-\frac{\cc_{2,\nu}}{2}A_{n,\nu}''(0))=\cc_{n,\nu}\delta_{n,r}.
\]

In order to obtain the function of the Theorem~\ref{thm:Sheffer-Dunkl}, we take the function
\[
\frac{1}{g(t)}=\frac{1}{1-t^2}=\sum_{n=0}^{\infty}\cc_{2n,\nu}\frac{t^{2n}}{\cc_{2n,\nu}},
\]
and the moments will be
\[
\omega_{n,\nu}=\left\{\begin{array}{ll}
	\cc_{n,\nu}, & n=2k,\\
	0, & n=2k+1.
	\end{array}\right.
\]
This moment problem was also studied in~\cite{DGV} and they obtained the function
\[
\beta_{\nu}(x)=\frac{|x|^{\nu+1}K_{\nu}(|x|)}{2^{\nu+1}\Gamma(\nu+1)},
\]
that it is a positive measure if $\nu>-1$. 

%-------------------------------
\subsection{Bernoulli-Dunkl polynomials}
%-------------------------------

Bernoulli-Dunkl polynomials were introduced in \cite{CDPV} to sum series of zeros of Bessel functions. Moreover, they have been also studied in \cite{CMV} and \cite{MV}. Bernoulli-Dunkl polynomials $\B_{n,\nu}(x)$ are defined by means of the generating function
\[
\frac{\Ea(xt)}{\I_{\nu+1}(t)}=\sum_{n=0}^{\infty}\B_{n,\nu}(x)\frac{t^n}{\cc_{n,\nu}}.
\]
In this case,
\[
g(t)=\I_{\nu+1}(t) = \sum_{n=0}^\infty \frac{t^{2n}}{\cc_{2n,\nu+1}} = \sum_{k=0}^\infty \mu_{k,\nu} \frac{t^{k}}{\cc_{k,\nu}},
\]
and the moments for Theorem~\ref{teo:Thorne-S-D} are
\[
\mu_{n,\nu}=\begin{cases}
	\frac{\cc_{2k,\nu}}{\cc_{2k,\nu+1}},& n=2k,\\
	0,& n=2k+1.
\end{cases}
\]
From \eqref{eq:ccna} 
\[
\frac{\cc_{2k,\nu}}{\cc_{2k,\nu+1}}=\frac{(\nu+1)_k}{(\nu+2)_k}=\frac{\nu+1}{\nu+k+1}.
\]
%\rojo{Método de Antonio}
%
%For a Borel measure $\Omega$ (positive or not), we denote by $\Omega^e$ the measure defined by
%\[
%\Omega^e(A) = \frac{\Omega(A)+\Omega(-A)}{2}.
%\]
%It is straightforward to see that if we write
%\[
%a_n = \int_{-\infty}^{\infty}t^n d\Omega(t),
%\]
%then 
%\[
%a_n^e = \int_{-\infty}^{\infty} t^n d\Omega^e(t) =
%\begin{cases}
%a_n, & \text{ if } n \text{ is even,}\\
%0, & \text{ if } n \text{ is odd.}
%\end{cases}
%\]
%Using $\frac{1}{(1/2)_k} = \frac{4^k k!}{(2k)!}$ we can write
%\begin{align*}
%\phi(x) & 
%= \sum_{k=0}^\infty \mu_k\frac{i^k}{2\pi k!}x^k = \sum_{k=0}^\infty \frac{(\nu+1)_k}{(\nu+2)_k}\frac{(-1)^k}{2\pi(2k)!}x^{2k} \\
%& = \frac{1}{2\pi} \sum_{k=0}^\infty \frac{(\nu+1)_k}{(\nu+2)_k(1/2)_k k!}\left(\frac{-x^2}{4}\right)^k =  \frac{1}{2\pi}\,{}_1F_2(\nu+1,\nu+2,1/2;-x^2/4).
%\end{align*}
The corresponding function \eqref{eq:fun-aux} is
\[
F(t)=\sum_{k=0}^{\infty}\frac{i^{2k}}{2\pi(2k)!}\frac{(\nu+1)_k}{(\nu+2)_k}t^{2k}= \frac{1}{2\pi}\,{}_1F_2(\nu+1,\nu+2,1/2;-t^2/4).
\]
We calculate its Fourier transform (which is \cite[p.61, eq.(5)]{Erdelyi}) and we obtain the measure
% 
%\[
%\int_{0}^{\infty} {}_1F_2(a,b,1/2;-x^2/4)\cos{(xt)}dx = 
%\begin{cases}
%\frac{\pi\Gamma(b)}{\Gamma(a)\Gamma(b-1)}t^{2a-1}(1-t^2)^{b-a-1},& 0 < t < 1,\\
%0, & t> 1.
%\end{cases}
%\]
%valid for $\Re(b)>0$ and $\Re(a)> 0$.
%
%\[
%\int_{-\infty}^{\infty} \phi(x)e^{-ixt}dx = 2\int_{0}^{\infty} \phi(x)\cos{(xt)}dx 
\begin{equation}\label{eq:measure-B-D}
\alpha_{\nu}(x) 
= \begin{cases}
	0, & x<-1,\\
(\nu+1)|x|^{2\nu+1}, & -1< x <1, \\
0 & x > 1,
\end{cases}
\end{equation}
valid for $\nu > -1$.
%\begin{multline*}
%\mu_{2n}=\int_{-\infty}^{\infty} x^{2n}\,d\nu(x) = (\nu+1)\int_{-1}^0 x^{2n}(-x)^{2\nu+1}dx+(\nu+1)\int_0^1x^{2n}x^{2\nu+1}\,d\nu(x) \\=
%(\nu+1)\int_0^1 x^{2n}x^{2\nu+1}\,dx+(\nu+1)\int_0^1x^{2n}x^{2\nu+1}=
% \frac{\nu+1}{n+\nu+1},\quad n=0,1,\ldots,
%\end{multline*}
%\begin{multline*}
%	\mu_{2n+1}=\int_{-\infty}^{\infty} x^{2n+1}\,d\nu(x) = (\nu+1)\int_{-1}^0 x^{2n+1}(-x)^{2\nu+1}dx+(\nu+1)\int_0^1x^{2n+1}x^{2\nu+1}\,dx \\= -(\nu+1)\int_0^1 x^{2n+2\nu+2}\,dx+(\nu+1)\int_0^1x^{2n+2\nu+2}\,dx=0.
%\end{multline*}
So,
\[
\int_{-\infty}^{\infty}\frac{\cc_{n,\nu}}{\cc_{n-r,\nu}}\B_{n-r,\nu}(x)\,d\alpha_{\nu}(x)=\cc_{n,\nu}\delta_{n,r}.
\]
Using that $\B_{2n,\nu}(x)$ are even polynomials and $\B_{2n+1,\nu}(x)$ are odd polynomials, it holds that
\[
  \int_{0}^1 \B_{2n,\nu}(x)x^{2\nu+1}\,dx=\begin{cases}
  	0, & n>0,\\
  	1, & n=0.
  \end{cases}
\]
To study the function for Theorem~\ref{thm:Sheffer-Dunkl} we would need to study the problem moment for
\[
\omega_{n,\nu}=\B_{n,\nu}(0),
\]
that we do not know to solve it.

%----------
\subsection{Bernoulli-Dunkl polynomials of the second kind}
%----------

Bernoulli-Dunkl polynomials of the second kind are the discrete case corresponding to Bernoulli-Dunkl polynomials. They are defined and studied in \cite{ExLaMiVa}. A sequence of polynomials $\{b_{n,\nu}(x)\}_{n=0}^{\infty}$ is a sequence of Bernoulli-Dunkl of the second kind if they satisfy
\[
\frac{t}{\overline{G_\nu}(t)}\Ea(x\overline{G_{\nu}}(t))=\sum_{n=0}^{\infty}b_{n,\nu}(x)\frac{t^n}{\cc_{n,\nu}}.
\]
The operator $\LL_{G_{\nu}}$ is \eqref{eq:oper-dis} and the measure $\alpha_{\nu}(x)$ of Theorem~\ref{teo:Thorne-S-D} is the same that for Bernoulli-Dunkl polynomials \eqref{eq:measure-B-D}.

%----------
\subsection{Euler-Dunkl polynomials}
%----------

Euler-Dunkl polynomials, $\{\E_{n,\nu}(x)\}_{n=0}^\infty$, are studied in \cite{CMV,DPV,MV}. They are defined by means of the generating function
\[
\frac{\Ea(xt)}{\I_{\nu}(t)} 
= \sum_{n=0}^{\infty}\frac{\E_{n,\nu}(x)}{\gamma_{n,\nu}}t^n.
\]
In this case, we are going to give the measure corresponding to Theorem~\ref{teo:Thorne-S-D}. The power series of the function $\I_{\nu}(t)$ is 
\[
\I_{\nu}(t)=\sum_{k=0}^{\infty}\frac{t^{2k}}{\cc_{2k,\nu}}.
\]
So, we are looking for a function whose moments are
\[
\mu_{n,\nu}=\left\{\begin{array}{ll}
	1,& n=2k,\\
	0,& n=2k+1.
	\end{array}\right. 
\] 
With these moments our function \eqref{eq:fun-aux} is
\[
F(t)=\sum_{k=0}^{\infty}\frac{i^{2k}}{2\pi(2k)!}t^{2k}=\frac{1}{2\pi}\cos t.
\]
From this function, following \cite{Duran} we obtain that
\begin{equation}\label{eq:medida-E-D}
\alpha_{\nu}(x)=\frac{1}{2}(\delta_{-1}(x)+\delta_1(x)).
\end{equation}
Applying the operator $\int_{-\infty}^{\infty}d\alpha_{\nu}(x)$ to Euler-Dunkl polynomials, we obtain
\[
\int_{-\infty}^{\infty}\La^r\E_{n,\nu}(x)\,d\nu(x)=\frac{\cc_{n,\nu}}{\cc_{n-r,\nu}}\frac{\E_{n-r,\nu}(-1)+\E_{n-r,\nu}(1)}{2}=\cc_{n,\nu}\delta_{n,r}.
\]
We knew that it is true because as it was proven in~\cite{CMV} that $\E_{n,\nu}(1)=0=\E_{n,\nu}(-1)$ for $n\ge 1$.

To study the function for Theorem~\ref{thm:Sheffer-Dunkl} we would need to solve the problem moment for
\[
\omega_{n,\nu}=\E_{n,\nu}(0),
\]
that we do not know to solve it.

%----------
\subsection{Boole-Dunkl polynomials}
%----------

Boole-Dunkl polynomials are defined and studied in~\cite{GLMV} and they are the correspon\-ding discrete polynomials to Euler-Dunkl polynomials. A sequence of polynomials $\{e_{n,\nu}(x)\}_{n=0}^{\infty}$ is a sequence of Boole-Dunkl polynomials if its generating function is given by
\[
\frac{\Ea(x\overline{G_{\nu}}(t))}{\I_{\nu}(\overline{G{\nu}}(t))}=\sum_{n=0}^{\infty}e_{n,\nu}(x)\frac{t^n}{\cc_{n,\nu}}.
\]
The operator $\LL_{G_{\nu}}$ for the discrete Appell-Dunkl polynomials is \eqref{eq:oper-dis}.
In this case, the measure corresponding to Theorem~\ref{teo:Thorne-S-D} is the same that for Euler-Dunkl polynomials \eqref{eq:medida-E-D} and it holds
\[
\int_{-\infty}^{\infty}\LL_{G_{\nu}}^r e_{n,\nu}(x)\,d\alpha_{\nu}(x)=\frac{\cc_{n,\nu}}{\cc_{n-r,\nu}}\frac{e_{n-r,\nu}(-1)+e_{n-r,\nu}(1)}{2}=\cc_{n,\nu}\delta_{n,r}.
\]
This can be also deduce because as it was proven in~\cite{GLMV}, 
\[
e_{n,\nu}(x)=\sum_{l=0}^n\frac{l}{n}\binoma{n}{l}\E_{l,\nu}(x)\B_{n-l,\nu}(0),\, n\ge 1,
\]
where $\B_{n,\nu}(x)$ are the Bernoulli-Dunkl polynomials. So, $e_{n,\nu}(1)=0=e_{n,\nu}(-1)$, $n\ge 1$.

\section*{Declarations}

\textbf{Ethical approval:} Not applicable.\\

\noindent\textbf{Conflict of interest:} The authors declare no competing interests.\\

\noindent\textbf{Funding:}  Partially supported by grant PID2021-124332NB-C22 
(Mi\-nis\-te\-rio de Cien\-cia e Inno\-va\-ci\'on-Agen\-cia Esta\-tal de Inves\-ti\-ga\-ci\'on, Spain).

%-----------

%-----------

\end{document}